\theoremstyle{plain}
\newtheorem{theorem}{Theorem}[section]
\newtheorem{corollary}[theorem]{Corollary}
\newtheorem{lemma}[theorem]{Lemma}
\newtheorem{proposition}[theorem]{Proposition}
\theoremstyle{definition}
\newtheorem{definition}[theorem]{Definition}
\newcommand{\N}{\hbox{${\mathbb N}$}}
\newcommand{\R}{\hbox{${\mathbb R}$}}
\newcommand{\lrvert}[1]{\left\vert #1 \right\vert}
\newcommand{\darkgreen}{green!75!black}
\title{A Pure Taxicab Perspective on Isometries}
\author[Dunbar]{Jonathan D. Dunbar}
\address{Mathematics Discipline, Saint Norbert College\\
De Pere, WI 54115}
\email{jonathan.dunbar@snc.edu}
\author[Woltman]{Nathaniel Woltman}
\address{Saint Norbert College\\
De Pere, WI 54115}
\email{nathanielwoltman02@gmail.com}
\begin{document}

\subjclass[2020]{51N10, 51F25, 14R99} % I think??? Does "Absolute planes in metric geometry" sound right??
\keywords{Taxicab geometry, analytic geometry, isometries, Euclid} % Others???

\begin{abstract}
% Keep it brief!
%As explored by Krause in \cite{krause}, only changing the distance metric has profound effects on planar geometry, forming what is now known as traditional taxicab geometry. So, we investigate what happens when we also allow the change in distance metric to affect geometric constructs such as circles, chords, and angles, forming what is now known as pure taxicab geometry.
%We investigate what happens when we change the distance metric, allowing it to affect geometric constructs such as circles, chords, and angles, forming what is now known as pure taxicab geometry.
In this paper, we explicitly show the various isometries of the plane under the taxicab metric. We then use these isometries to prove that Euclid's proposition I.5 for isoscelese triangles is true under certain circumstances in taxicab geometry.
\end{abstract}

%\doublespacing
\maketitle

\section{Introduction}
%The taxicab metric was introduced in a guidebook by Menger \cite{menger} for an exhibit at the Museum of Science and Industry, Chicago, in 1952. Later, Krause would write \textit{Taxicab Geometry}, \cite{krause}, and lay out the foundations necessary to a successful inspection of it as a metric geometry.
%
%The taxicab metric is a distance formula between points $P(x_1,y_1)$ and $Q(x_2,y_2)$ defined by the aggregate sum of the horizontal and vertical distances between $P$ and $Q$. Namely,
%$$d_T(P,Q) = \lrvert{x_1 - x_2} + \lrvert{y_1 - y_2}.$$
%
%This contrasts with the standard \textit{straight-line distance} that we call the Euclidean metric. This metric is most commonly familiar as the distance formula learned in grade school, corresponding to the Pythagorean Theorem. When used in this paper, we will denote the Euclidean metric $d_E(P,Q) = \sqrt{\lrpar{x_1-x_2}^2 +\lrpar{y_1-y_2}^2}$. Notably, when two points $P$ and $Q$ are either in the same horizontal line or the same vertical line, then $d_T(P,Q) = d_E(P,Q)$.
%
%[Euclid and propositions intro]
%
%As we investigate Euclid's propositions, we inevitably need to address angles, in addition to lengths. When studying Taxicab geometry, 
Taxicab geometry dates back to 1910, with Hermann Minkowski's work in \cite{minkowski}. Though Minkowski did not use the phrase ``taxicab geometry,'' he did propose a new way of measuring distances. It was not until 1952, in \cite{menger}, when Karl Menger put a name to these taxicab distances . Later, in \cite{krause}, Krause provided an accessible introduction to taxicab geometry, exploring the metric through graphical geometric constructions and certain conic sections.  

Taxicab geometry is similar to Euclidean coordinate geometry, in that points and lines are the same. The primary difference lies in how distance is measured. The Euclidean distance between two points $P(x_1,y_1)$ and $Q(x_2,y_2)$ is commonly understood to be
\begin{equation}\label{euclideanmetric}
d_e(P,Q)=\sqrt{(x_1-x_2)^2+(y_1-y_2)^2},
\end{equation}
whereas the taxicab metric for distance is given by 
\begin{equation}\label{taxicabmetric}
d_T(P,Q)=|x_1-x_2|+|y_1-y_2|.
\end{equation}
%How angles are measured, however, depends on which branch of taxicab geometry you subscribe to.

Of particular interest to us, Thompson and Dray explored how angles, trigonometry, and other related topics behave in taxicab geometry in \cite{thompson:2000}. Then in \cite{web:taxicabgeometry}, Thompson describes two major branches of taxicab geometry: \textit{traditional} taxicab geometry and \textit{pure} taxicab geometry. Traditional taxicab geometry simply adopts the new distance metric \eqref{taxicabmetric}, but ``leaves other geometric features such as points, lines, and angles as Euclidean.'' On the other hand, ``pure taxicab geometry uses angles which are native and natural to the geometry." This paper will focus solely on pure taxicab geometry, and provide a treatment of isometries that are based in the assumption that angles and distances are those from pure taxicab geometry. Then, using these isometries, we investigate isoscelese triangles in an attempt to reconstruct a pivotal proposition from Euclid's \textit{Book I} with this new metric. We show, through the lens of pure taxicab geometry, which restrictions are necessary to preserve Euclid's proposition I.5.

\section{Background}
The adoption of the taxicab metric affects not only distance, but also how certain geometric figures, such as circles, look and are defined. In general, for some metric $d$, point $C$, and positive real number $r$, a circle is the set $\{P\ \vert\ d(P,C)=r\}$. That is, the set of all points $P$ such that the distance from $P$ to $C$ is $r$. As stated by Krause in \cite{krause}, to specify that we are talking about a taxicab circle, we need only specify that we are using the taxicab metric. In other words, a taxicab circle with center $C$ and radius $r$ is the set of all points $P$ that are a taxicab distance of $r$ away from $C$. Changing the distance metric, however, changes the appearance and behavior of the circle, as seen in Figure \ref{circle}. 

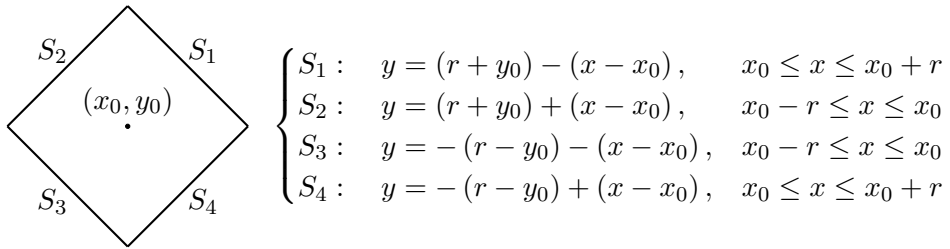
\begin{figure}[h]
\begin{tikzpicture}[scale=0.4]
    \draw[thick](0,4)--(4,0);
    \node at (2.5,2.5) {$S_1$};
    \draw[thick](0,-4)--(4,0);
    \node at (2.5,-2.5) {$S_4$};
    \draw[thick](0,4)--(-4,0);
    \node at (-2.5,2.5) {$S_2$};
    \draw[thick](0,-4)--(-4,0);
    \node at (-2.5,-2.5) {$S_3$};
    \filldraw[black] (0,0) circle (2pt);
    \node[above] at (0,0) {$(x_0,y_0)$};

    \node at (16,0) {
    $\begin{cases}
        %\color{red}
        S_1:\quad y=\left(r+y_{0}\right)-\left(x-x_{0}\right), & %\color{black} 
            x_{0}\le x\le x_{0}+r\\
        %\color{blue}
        S_2:\quad y=\left(r+y_{0}\right)+\left(x-x_{0}\right), & %\color{black} 
            x_{0}-r\le x\le x_{0}\\
        %\color{\darkgreen}
        S_3:\quad y=-\left(r-y_{0}\right)-\left(x-x_{0}\right), & %\color{black}
            x_{0}-r\le x\le x_{0}\\
        %\color{orange}
        S_4:\quad y=-\left(r-y_{0}\right)+\left(x-x_{0}\right), & %\color{black}
            x_{0}\le x\le x_{0}+r\\
    \end{cases}$};
\end{tikzpicture}\caption{A taxicab circle with center $(x_0,y_0)$ and radius $r$, with labeled sides and equations.}\label{circle}
\end{figure}

Going forward, we identify side $S_i$ of a taxicab circle with center $(x_0,y_0)$ and radius $r$, using the orientation given in Figure \ref{circle}.
%Do we actually do this???

Notably, where angle measurements are concerned, our study of taxicab geometry may differ from other sources. We will be working with pure taxicab geometry, following \cite{thompson:2000}, wherein angle measurements are defined using the unit taxicab circle (Figure \ref{unitcircle}). Furthermore, though our focus is taxicab geometry, we will occasionally refer to Euclidean measurements. We will explicitly state we are using the Euclidean metric or the taxicab metric by adopting a subscript $e$ or $T$, respectively. Otherwise, assume that $d(P,Q)$ refers to a generic metric.

\begin{figure}[h]
\begin{tikzpicture}[scale=2]
    \draw[thick,black](0,1)--(1,0);
    \draw[thick,black](0,-1)--(1,0);
    \draw[thick,black](0,1)--(-1,0);
    \draw[thick,black](0,-1)--(-1,0);
    \draw[thick,black](-1.125,0)--(1.125,0);
    \draw[thick,black](0,-1.125)--(0,1.125);
    \draw[thick,black](-0.125,1)--(0.125,1);
    \draw[thick,black](-0.125,-1)--(0.125,-1);
    \draw[thick,black](-1,-0.125)--(-1,0.125);
    \draw[thick,black](1,-0.125)--(1,0.125);
    \node at (0.25,1){1};
    \node at (0.25,-1){-1};
    \node at (-1,-0.25){-1};
    \node at (1,-0.25){1};
\end{tikzpicture}
    \caption{The unit taxicab circle}
    \label{unitcircle}
\end{figure}
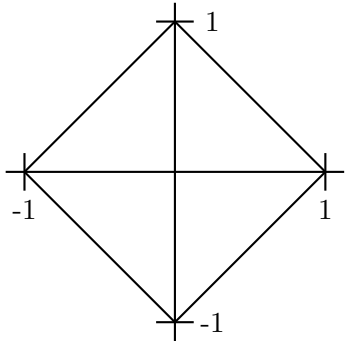

\begin{definition}\label{tradian}
A $t$-radian is an angle whose vertex is the center of a unit taxicab circle and intercepts an arc of length 1. The taxicab measure of a taxicab angle $\theta$ is the number of $t$-radians subtended by the angle on the unit taxicab circle about the vertex.
\end{definition}

In \cite{thompson:2000}, Thompson and Dray provide the above definition, as well as the subsequent theorem and lemma, which we will use. 

\begin{theorem}\label{taxicabanglestdpos}
An acute Euclidean angle $\phi_e$ in standard position has a taxicab measure of 

\begin{equation}
    \theta=2-\frac{2}{1+{tan}_e(\phi_e)}
\end{equation}
\end{theorem}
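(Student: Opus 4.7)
The plan is to apply Definition \ref{tradian} directly: place $\phi_e$ in standard position with vertex at the origin, identify where its terminal ray meets the unit taxicab circle, and compute the taxicab arc length from $(1,0)$ to that intersection point along side $S_1$.

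First, I would set up coordinates so that the initial ray of $\phi_e$ lies along the positive $x$-axis, meeting the unit taxicab circle at $(1,0)$. Since $\phi_e$ is acute, its terminal ray lies in the open first quadrant with Euclidean slope $\tan_e(\phi_e)$, so it lies on the line $y = x \tan_e(\phi_e)$. This ray must exit the unit taxicab circle through side $S_1$, which by Figure \ref{circle} (with $(x_0,y_0)=(0,0)$ and $r=1$) is the segment $y = 1-x$ for $0 \le x \le 1$. Solving the linear system $y = 1-x$, $y = x\tan_e(\phi_e)$ yields the intersection point
\[
(x^*, y^*) = \left( \frac{1}{1+\tan_e(\phi_e)},\ \frac{\tan_e(\phi_e)}{1+\tan_e(\phi_e)} \right).
\]

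Next, I would compute the taxicab arc length along $S_1$ from $(1,0)$ to $(x^*, y^*)$. Because $S_1$ is a straight segment of slope $-1$, any parameterization traverses it with $x$ strictly decreasing and $y$ strictly increasing; consequently the taxicab arc length between two points on $S_1$ equals their taxicab distance. Thus the arc length subtended is $(1 - x^*) + y^*$. By Definition \ref{tradian}, this quantity \emph{is} the taxicab measure $\theta$, so
\[
\theta = (1 - x^*) + y^* = \frac{\tan_e(\phi_e)}{1+\tan_e(\phi_e)} + \frac{\tan_e(\phi_e)}{1+\tan_e(\phi_e)} = \frac{2\tan_e(\phi_e)}{1+\tan_e(\phi_e)}.
\]
A final algebraic rearrangement gives $\theta = 2 - \frac{2}{1+\tan_e(\phi_e)}$, as desired.

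There is not really a hard step here; the only point worth pausing on is the justification that taxicab arc length along $S_1$ reduces to taxicab distance between endpoints. This follows immediately from monotonicity of the coordinates on $S_1$, but it is the one place where an unwary reader might wonder whether something more sophisticated (an integral of $|dx|+|dy|$) is needed. Handling this with a single sentence should be sufficient.
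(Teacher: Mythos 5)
Your proof is correct. The paper itself offers no proof of this theorem --- it is quoted from Thompson and Dray \cite{thompson:2000} --- but your derivation (intersect the terminal ray $y = x\tan_e(\phi_e)$ with side $S_1$ of the unit taxicab circle, then observe that monotonicity of the coordinates along $S_1$ lets the taxicab arc length collapse to the taxicab distance between endpoints) is exactly the standard argument for this formula, and every step, including the final algebra, checks out.
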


%\begin{proof}
%See \cite{thompson:2000}.
%\end{proof}
%

Using Theorem \ref{taxicabanglestdpos} and reference angles, Thompson and Dray prove the following lemma, which tells us that Euclidean right angles correspond exactly with taxicab right angles. Hence, perpendicularity in taxicab geometry will be exactly the same as perpendicularity in Euclidean geometry.

\begin{lemma}\label{rightangdef}
Euclidean right angles always have a taxicab measure of 2 $t$-radians.
\end{lemma}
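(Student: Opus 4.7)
The plan is to verify the lemma for a right angle in standard position by direct computation, and then extend to arbitrary orientations using the reflective symmetries of the unit taxicab circle.

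Since translations preserve the taxicab metric, we may place the vertex at the origin. When the initial side lies along the positive $x$-axis and the terminal side along the positive $y$-axis, the arc intercepted on the unit taxicab circle is precisely the side $S_1$ from $(1,0)$ to $(0,1)$ in Figure \ref{circle}, whose taxicab length is $|1-0|+|0-1|=2$. By Definition \ref{tradian} this right angle therefore measures $2$ t-radians.

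For a right angle in a general orientation, let $\alpha$ be the Euclidean angle from the positive $x$-axis to the initial side. Because Euclidean rotations do not preserve the taxicab metric, I cannot simply rotate to standard position. However, the reflections of the plane across the $x$-axis, the $y$-axis, and the lines $y=\pm x$ all preserve the taxicab metric, and together these allow us to reduce to the case $0<\alpha<90^{\circ}$, where the initial side meets $S_1$ and the terminal side (at Euclidean angle $\alpha+90^{\circ}$) meets $S_2$. Applying Theorem \ref{taxicabanglestdpos} to $\alpha$ gives the taxicab length of the arc from $(1,0)$ up to the initial side, so the remaining arc from the initial side to $(0,1)$ along $S_1$ has length $2$ minus that. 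For the piece on $S_2$ from $(0,1)$ to the terminal side, I would reflect across the $y$-axis, which maps the terminal ray to one at Euclidean angle $90^{\circ}-\alpha$ above the positive $x$-axis, and then apply Theorem \ref{taxicabanglestdpos} to this acute angle. Summing the two pieces and using $\tan(90^{\circ}-\alpha)=\cot(\alpha)$, the total should simplify to $2$.

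The main obstacle is the bookkeeping in the second step: identifying which arc of the taxicab circle corresponds to $90^{\circ}-\alpha$ under reflection across the $y$-axis, and verifying that the four reflections named above genuinely suffice to reduce every orientation of a right angle to the case $0<\alpha<90^{\circ}$. Once this correspondence is set up correctly, the final algebraic simplification is routine.
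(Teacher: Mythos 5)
Your argument is correct and is essentially the argument the paper defers to: the lemma is stated without proof and attributed to Thompson and Dray, who derive it from Theorem \ref{taxicabanglestdpos} together with reference angles, which is exactly your decomposition of the right angle at the $y$-axis crossing after reducing to $0\le\alpha<90^{\circ}$ via the distance-preserving reflections. The one detail to make explicit in your second step is that Theorem \ref{taxicabanglestdpos} applied to $90^{\circ}-\alpha$ gives the arc from $(1,0)$ to the image of the terminal side, so the $S_2$ piece is its complement $2-\left(2-\frac{2}{1+\cot(\alpha)}\right)=\frac{2}{1+\cot(\alpha)}$, whence $\frac{2}{1+\tan(\alpha)}+\frac{2}{1+\cot(\alpha)}=2$ as you predict.
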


%\begin{proof}
%See \cite{thompson:2000}.
%\end{proof}
%

Lines in Euclidean geometry are also lines in taxicab geometry, a fact that we have auspiciously used in Figure \ref{circle}. Since midpoints are based on the average distance in the respective $x$ and $y$ directions, and because strictly horizontal and vertical distances are equivalent in taxicab and Euclidean, the definition of midpoint is likewise the same in both geometries.

\begin{definition}\label{midpoint}
In taxicab geometry, the \textbf{midpoint} of line segment $AB$ is
$M=\left(\frac{1}{2}(x_1+x_2),\frac{1}{2}(y_1+y_2)\right)$, where $A= (x_1, y_1)$ and $B= (x_2, y_2)$.
\end{definition}

\section{Distance Preserving Maps}

Moving forward it will be useful to reduce our cases to objects oriented near the origin. However, not all of the rigid movements that we are familiar with under the Euclidean metric will maintain the distances between points under the Taxicab metric. We will begin by introducing these taxicab transformations of the 2-dimensional plane, before noting which of them preserve taxicab distances.

\begin{definition}\label{rotdef}
The rotation of a point $P$ by $r$ $t$-radians about some point $Q$ is a transformation of $P$ to some point $P'$ counterclockwise along the taxicab circle of radius $r$ centered at $Q$ such that $d_T(P,Q)=d_T(P',Q)$ and $m\angle PQP'=r$. See Figure \ref{fig:1}.
\end{definition}

\begin{definition}\label{refdef}
The reflection of a point $P$ across a line $l$ produces a point $P'$ such that $d_T(P,M)=d_T(P',M)$ and $PP'\perp l$, where $M$ is the midpoint of $PP'$ on $l$. See Figure \ref{fig:1}. 
\end{definition}

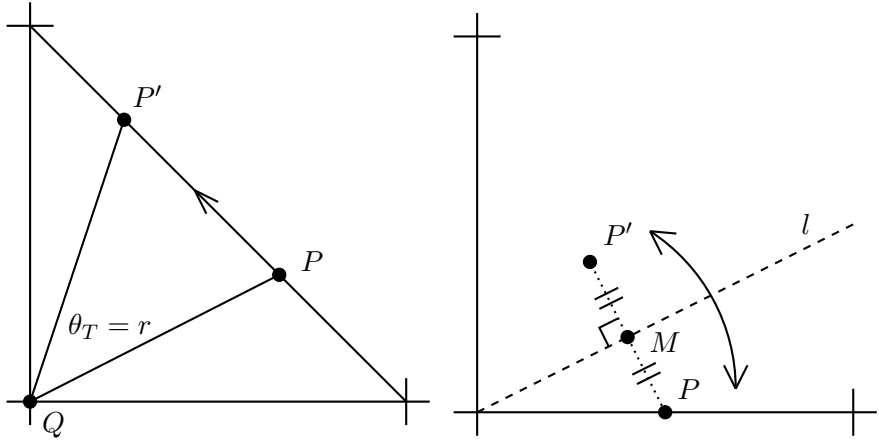
\begin{figure}[h]
\begin{minipage}{2.25in}
%\begin{figure}%[h]
\begin{tikzpicture}[scale=1.25]
\draw[thick,black](-0.25,0)--(4.25,0);
\draw[thick,black](0,-0.25)--(0,4.25);
\draw[thick,black](-0.25,4)--(0.25,4);
\draw[thick,black](4,-0.25)--(4,0.25);
\draw[thick,black](0,4)--(4,0);
\draw[thick,black](0,0)--(2.65,1.35);
\draw[thick,black](0,0)--(1,3);
\filldraw [black] (2.65,1.35) circle (2pt);
\node at (3,1.5) {$P$};
\filldraw [black] (1,3) circle (2pt);
\node at (1.25,3.25) {$P'$};
\filldraw [black] (0,0) circle (2pt);
\node at (0.25,-0.25) {$Q$};
\node at (0.85,0.8) {${\theta}_T=r$};
%\draw[thick,black] (1.641,0.859) arc
%    [
%        start angle=30,
%        end angle=90,
%        x radius=1.852cm,
%        y radius=1.852cm
%    ] ;
%\draw[thick,black](0,1.77)--(0.2,1.57);
%\draw[thick,black](0,1.77)--(0.2,1.97);
\draw[thick,black](1.75,2.25)--(1.875,2);
\draw[thick,black](1.75,2.25)--(2,2.125);
\end{tikzpicture}%\caption{}
%\end{figure}
\end{minipage}\hspace{0.1cm}
\begin{minipage}{2.25in}
%\begin{figure}%[h]
\begin{tikzpicture}[scale=1.25]
\draw[thick,black](-0.25,0)--(4.25,0);
\draw[thick,black](0,-0.25)--(0,4.25);
\draw[thick,black](-0.25,4)--(0.25,4);
\draw[thick,black](4,-0.25)--(4,0.25);
\draw[thick,black,dashed](0,0)--(4,2);
\node at (3.5,2) {$l$};
\draw[thick,black,dotted](2,0)--(1.2,1.6);
\draw[thick,black] (2.75,0.25) arc
    [
        start angle=0,
        end angle=57,
        x radius=2,
        y radius=2
    ] ;
\draw[thick,black](2.75,0.25)--(2.625,0.5);
\draw[thick,black](2.75,0.25)--(2.875,0.5);
\draw[thick,black](1.839,1.927)--(2.119,1.902);
\draw[thick,black](1.839,1.927)--(1.964,1.677);
\filldraw [black] (2,0) circle (2pt);
\node at (2.25,0.25) {$P$};
\filldraw [black] (1.2,1.6) circle (2pt);
\node at (1.5,1.9) {$P'$};
\filldraw [black] (1.6,0.8) circle (2pt);
\node at (2,0.75) {$M$};
\draw[thick,black](1.4,0.7)--(1.3,0.9)--(1.5,1);
\draw[thick,black](1.3,1.1)--(1.55,1.225);
\draw[thick,black](1.25,1.2)--(1.5,1.325);
\draw[thick,black](1.7,0.3)--(1.95,0.425);
\draw[thick,black](1.65,0.4)--(1.9,0.525);
\end{tikzpicture}%\caption{}
%\end{figure}
\end{minipage}\caption{An example of a taxicab rotation and reflection}
\label{fig:1}
\end{figure}

\begin{lemma}\label{twotrotations}
    A Euclidean rotation by $\frac{\pi}{2}$ radians, and a taxicab rotation by 2 $t$-radians are equivalent transformations of $\R^2$.
\end{lemma}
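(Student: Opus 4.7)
The plan is to check that the two transformations produce the same image on every point. Let $P$ be an arbitrary point and $Q$ the common center of rotation. Since translations preserve both the Euclidean and taxicab metrics and preserve orientation, I may translate so that $Q$ coincides with the origin $O$; both definitions depend only on the displacement of $P$ from the center, so this reduction is harmless.

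Writing $P = (x, y)$, the Euclidean rotation by $\pi/2$ about $O$ sends $P$ to $P' = (-y, x)$. I then verify the three requirements in Definition \ref{rotdef} identifying $P'$ as the taxicab rotation of $P$ by $2$ $t$-radians about $O$. First,
\begin{equation*}
d_T(P, O) = |x| + |y| = |{-y}| + |x| = d_T(P', O),
\end{equation*}
so $P$ and $P'$ lie on the same taxicab circle centered at $O$. Second, by construction $OP \perp OP'$ in the Euclidean sense, so $\angle POP'$ is a Euclidean right angle, and Lemma \ref{rightangdef} gives $m\angle POP' = 2$ $t$-radians. Third, a positive-angle Euclidean rotation is counterclockwise, and the counterclockwise orientation on $\R^2$ is a topological feature independent of the chosen metric, so $P'$ lies counterclockwise from $P$ along the common taxicab circle.

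The main step worth dwelling on is the angle condition: everything turns on Lemma \ref{rightangdef} translating Euclidean perpendicularity directly into a taxicab measure of $2$ $t$-radians at the vertex $O$. Once that is in hand, the distance and orientation conditions fall out of the explicit formula $(x, y) \mapsto (-y, x)$, and a rotation by $2$ $t$-radians is well short of a full taxicab revolution, so the three conditions pin down $P'$ uniquely. Thus the Euclidean $\pi/2$-rotation about $Q$ coincides pointwise with the taxicab $2$ $t$-radian rotation about $Q$.
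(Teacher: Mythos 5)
Your proof is correct, but it takes a genuinely different route from the paper's. The paper verifies the angle condition by an explicit arc-length computation: it places $P=(x,r-x)$ on the taxicab circle $C$ of radius $r$, notes the counterclockwise arc from $P$ to $P'=(-y,x)$ passes through the corner $V=(0,r)$, and computes $d_T(P,V)+d_T(V,P')=2|x|+2|r-x|=2r$, so that $P'$ subtends an arc of length $2r$ and hence an angle of $2$ $t$-radians (this implicitly uses the scaling of Definition \ref{tradian} from the unit circle to radius $r$, and is carried out only for the quadrant-I case, with the others ``similarly shown''). You instead verify the three conditions of Definition \ref{rotdef} directly, getting the angle condition in one stroke from Lemma \ref{rightangdef} applied to the Euclidean-perpendicular rays $OP$ and $OP'$. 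Your route is case-free and sidesteps the radius-$r$ arc-length scaling, at the cost of leaning entirely on the imported Lemma \ref{rightangdef} for the key step; the paper's computation is more self-contained and makes the quarter-circumference fact visible. Your added remarks --- the reduction to the origin by translation, the metric-independence of the counterclockwise orientation, and the observation that $2$ $t$-radians is less than a full revolution so the conditions determine $P'$ uniquely --- are all sound and in fact patch small gaps the paper leaves implicit.
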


\begin{proof}
    From Lemma \ref{rightangdef}, we know that angles measuring 2 $t$-radians and $\frac{\pi}{2}$ radians are equivalent. Let $P(x,y)$ be a point in the plane, with $\lrvert{x}+\lrvert{y}=r$. Hence, $P$ is a taxicab distance of $r$ from the origin and therefore lies on the taxicab circle of radius $r$ centered at the origin, which we will call $C$. Note that we may write point $P(x,y)$. The Euclidean rotation of $P$ by $\frac{\pi}{2}$ radians maps to the point $P'(-y,x)$. Observe that $P'$ is also a taxicab distance of $r$ from the origin since $\lrvert{-y}+\lrvert{x}=r$, and thus $P'$ is also on $C$.
    
    We wish to show that the distance along $C$ from $P$ to $P'$ is $2r$, and hence $P'$ is the result of a taxicab rotation but 2 $t$-radians. Assume the case that $P$ is in quadrant 1 and $P'$ is in quadrant 2, as all other cases are similarly shown. By this assumption, $x,y>0$, $y=r-x$, $P(x,y)=(x,r-x)$, and $P'(-y,x) = (x-r,x)$. 
    
    Let point $V$ be $(0,r)$. Rotating from $P$ to $P'$ along $C$ requires passing through $V$. Then, $d_T(P,V) = \lrvert{x}+\lrvert{r-(r-x)} = 2\lrvert{x}$ and $d_T(V,P) = \lrvert{r-x}+\lrvert{r-x} = 2\lrvert{r-x}$. So, the distance along $C$ is $2\lrvert{x}+2\lrvert{r-x} = 2\lrvert{x} + 2\lrvert{y} = 2r$, and thus $P'$ is a taxicab rotation of 2 $t$-radians counterclockwise from $P$.
\end{proof}

\begin{lemma}\label{y=x}
    Euclidean and taxicab reflections across the line $y=x$ are equivalent transformations of $\R^2$, as are Euclidean and taxicab reflections across the line $y=-x$, $y=0$, $x=0$.
\end{lemma}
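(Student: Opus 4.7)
The plan is to verify, for each of the four lines, that the Euclidean reflection of a general point $P$ satisfies the three conditions of Definition \ref{refdef}; since those conditions uniquely determine the image point $P'$, the Euclidean and taxicab reflections must then coincide as maps of $\R^2$.

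By Definition \ref{refdef}, the taxicab reflection across $l$ produces the point $P'$ such that $PP'\perp l$, the midpoint $M$ of $PP'$ lies on $l$, and $d_T(P,M)=d_T(P',M)$. Two of these conditions transfer from the Euclidean setting essentially for free. By Lemma \ref{rightangdef}, Euclidean and taxicab perpendicularity agree, so the Euclidean perpendicularity guaranteed by Euclidean reflection gives taxicab perpendicularity as well. By Definition \ref{midpoint}, the midpoint formula is identical in both geometries, so the Euclidean midpoint that lies on $l$ by construction is also the taxicab midpoint. Hence only the taxicab equidistance $d_T(P,M)=d_T(P',M)$ remains to be checked.

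I would verify this case by case. Writing $P=(a,b)$, the Euclidean reflections across $y=x$, $y=-x$, $y=0$, and $x=0$ are $(b,a)$, $(-b,-a)$, $(a,-b)$, and $(-a,b)$, with respective midpoints $\lrpar{\tfrac{a+b}{2},\tfrac{a+b}{2}}$, $\lrpar{\tfrac{a-b}{2},\tfrac{b-a}{2}}$, $(a,0)$, and $(0,b)$. A short direct calculation shows that $d_T(P,M)$ and $d_T(P',M)$ both equal $|a-b|$, $|a+b|$, $|b|$, and $|a|$ in the four cases, respectively, closing the proof.

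The only obstacle is organizational: the four cases are formally analogous, so presenting them cleanly without redundancy takes a little care, but nothing here is deep. As a sanity check, I note that the phenomenon is in fact general: since a Euclidean reflection always satisfies $P-M=-(P'-M)$, and $d_T$ is invariant under negating both coordinates, the taxicab equidistance holds automatically for reflections across any line. For the lemma as stated, however, a case-by-case computation is the most direct route.
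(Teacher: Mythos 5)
Your proposal is correct and in substance matches the paper's proof: both arguments rest on the facts that taxicab perpendicularity and midpoints coincide with their Euclidean counterparts, so the defining conditions of a taxicab reflection pin down exactly the Euclidean image point. The paper solves the resulting equations for the unknown taxicab image and recovers the Euclidean reflection, whereas you verify the conditions for the Euclidean image and appeal to uniqueness --- the same computation read in the opposite direction, with your explicit check of $d_T(P,M)=d_T(P',M)$ being a small added completeness point the paper leaves implicit.
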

\begin{proof}
    The result of the Euclidean reflection of the point $P(x,y)$ across the line $y=x$ is $Q(y,x)$. 

    Assume we taxicab reflect the point $P(x,y)$ across the line $y=x$ resulting in the point $P'(x',y')$. By, Definition \ref{refdef}, $PP'$ is perpendicular to the line $y=x$ and thus has slope $-1$, as it would in Euclidean geometry. Additionally by \ref{refdef}, the midpoint $M$ of $PP'$ will fall on the line $y=x$ and so we may label it $M(a,a)$. Note that, by Definition \ref{midpoint}, $a=\frac{1}{2}(x+x')$ and $a=\frac{1}{2}(y+y')$. Then, since the slope of $PM$ is $-1$ and the slope of $P'M$ is $-1$, we know that $a=\frac{1}{2}(x+y)$ and $a=\frac{1}{2}(x'+y')$. By these four equations, $x'=y$ and $y'=x$. Hence, $P'(x',y') = Q(y,x)$, and we have that Euclidean and taxicab reflections across the line $y=x$ are equivalent.

    The proof that Euclidean and taxicab reflections across the line $y=-x$ (and $y=0$, $x=0$, respectively) are equivalent is similarly shown.
\end{proof}

\begin{definition}\label{distancepresmap}
%A distance preserving map is a mapping $f$ from $\R^2\longrightarrow\R^2$ such that $d_T(P,Q)=d_T(f(P),f(Q))$, where $P$ and $Q$ arbitrary are points in $\R^2$.
A taxicab isometry is a map $f:\R^2\longrightarrow\R^2$ such that $d_T(P,Q)=d_T(f(P),f(Q))$, for all points $P$, $Q$ in $\R^2$.
\end{definition}

\begin{theorem}\label{finitecombo}
A composition of any finite number of taxicab isometries is also a taxicab isometry.
\end{theorem}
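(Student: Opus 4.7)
The plan is to prove this by induction on $n$, the number of taxicab isometries in the composition, reducing everything to the binary case that a composition of two taxicab isometries is a taxicab isometry.

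For the base case $n=1$, a single taxicab isometry is trivially a taxicab isometry by Definition \ref{distancepresmap}. For the inductive step, I will assume that any composition of $n$ taxicab isometries $f_n \circ f_{n-1} \circ \cdots \circ f_1$ is itself a taxicab isometry, and consider a composition $f_{n+1} \circ f_n \circ \cdots \circ f_1$ of $n+1$ such maps. Setting $g = f_n \circ \cdots \circ f_1$, the inductive hypothesis tells us that $g$ is a taxicab isometry, so the $(n+1)$-fold composition equals $f_{n+1} \circ g$, reducing the problem to showing that the composition of two taxicab isometries is a taxicab isometry.

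For this binary case, I would let $P, Q \in \R^2$ be arbitrary and compute directly using the defining property of taxicab isometries twice:
\[
d_T\bigl((f_{n+1} \circ g)(P),\, (f_{n+1} \circ g)(Q)\bigr) = d_T\bigl(f_{n+1}(g(P)),\, f_{n+1}(g(Q))\bigr) = d_T(g(P), g(Q)) = d_T(P,Q),
\]
where the second equality uses that $f_{n+1}$ is a taxicab isometry applied to the points $g(P)$ and $g(Q)$, and the third uses that $g$ is a taxicab isometry applied to $P$ and $Q$. Since $P$ and $Q$ were arbitrary, $f_{n+1} \circ g$ preserves taxicab distance between all pairs of points, completing the inductive step.

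There is no substantive obstacle in this argument — it is a direct formal consequence of Definition \ref{distancepresmap} and does not depend on any special feature of the taxicab metric (the same proof works for isometries of any metric space). The only thing one must be careful about is unpacking the composition so that the distance-preserving property of each map is applied to the correct pair of points, which is handled cleanly by the induction.
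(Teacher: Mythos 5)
Your proof is correct and takes essentially the same approach as the paper: both establish the two-map case by the same chain of equalities $d_T(f(g(P)),f(g(Q)))=d_T(g(P),g(Q))=d_T(P,Q)$ and then extend to $n$ maps by induction. Your write-up is in fact slightly more explicit about the inductive bookkeeping than the paper's, which merely asserts the induction; there is no substantive difference.
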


\begin{proof}
Let $f,g: \R^2\longrightarrow\R^2$ be taxicab isometries and let $P,Q$ be two points where $P,Q\in\R^2$. Then,
\begin{align*}
    d_T((f\circ g)(P),(f\circ g)(Q))&=d_T(f(g(P)),f(g(Q)))\\
    &=d_T(g(P),g(Q))
    =d_T(P,Q).
\end{align*}

Inductively, we can show that this is true for any finite number of isometries. Therefore, the composition of any finite number of taxicab isometries is also an isometry.
\end{proof} 
%Do I need to elaborate upon this/use something like proof by induction to prove this? Would this be better suited as a corollary, since it directly follows from the lemmas I just proved?

In this paper, we use a collection of taxicab isometries and their compositions to narrow our focus and reduce the number of cases we examine to figures situated near the origin. Explicitly, this will include taxicab circles centered at the origin and angles contained within one, two, or three quadrants with their vertex located at the origin. Once we show a property is true for one of these objects, then it will be true for isometric objects located elsewhere in the plane. Our first requirement, then, is to show that translations are always taxicab isometries.

\begin{lemma}\label{trans}
Translations are taxicab isometries.
\end{lemma}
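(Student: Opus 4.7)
The plan is to unpack the definition of a translation algebraically and then verify the taxicab isometry condition by direct computation. A translation of $\R^2$ by a fixed vector $(a,b) \in \R^2$ is the map $f: \R^2 \to \R^2$ given by $f(x,y) = (x+a, y+b)$. With this in hand, verifying Definition \ref{distancepresmap} amounts to showing that the additive constants cancel inside the absolute values of the taxicab metric.

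More concretely, I would take two arbitrary points $P = (x_1, y_1)$ and $Q = (x_2, y_2)$ in $\R^2$ and a translation $f$ by $(a,b)$, then compute
\begin{align*}
    d_T(f(P), f(Q)) &= |(x_1 + a) - (x_2 + a)| + |(y_1 + b) - (y_2 + b)| \\
    &= |x_1 - x_2| + |y_1 - y_2| \\
    &= d_T(P,Q).
\end{align*}
Since $P$ and $Q$ were arbitrary, this shows $f$ satisfies the taxicab isometry condition.

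There is no real obstacle here: the taxicab metric depends only on coordinate differences, and translation leaves those differences invariant. The only thing to be careful about is stating up front precisely what a translation of $\R^2$ is in coordinates, so that the cancellation step is unambiguous; once that is said, the argument is a one-line calculation.
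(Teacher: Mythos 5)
Your proposal is correct and follows essentially the same approach as the paper: write a translation in coordinates as $f(x,y)=(x+a,y+b)$, then observe that the additive constants cancel inside the absolute values of the taxicab metric. Nothing is missing.
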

\begin{proof}
Let points $A = (x_1,y_1)$ and $B= (x_2,y_2)$ in $\R^2$. Additionally, let $f$ be the translation defined by $f(x,y) = (x+a,y+b)$ for some real numbers $a$, $b$. Then, $f(A) = (x_1+a,y_1+b)$ and $f(B) = (x_2+a,y_2+b)$, and
\begin{align*}
    d_T(f(A),f(B))& %d_T((x_1+a,y_1+b),(x_2+a,y_2+b))\\
    %&
    =|(x_1+a)-(x_2+a)|+|(y_1+b)-(y_2+b)|\\
%    &=|x_1-x_2+a-a|+|y_1-y_2+b-b|\\
    &=|x_1-x_2|+|y_1-y_2|%\\
    %&
    =d_T(A,B)
\end{align*}

Therefore, translations are taxicab isometries.
\end{proof}

As Thompson mentions in \cite{thompson:2000}, ``although angles are translation invariant, they are not rotation invariant." Therefore, the taxicab measure of a Euclidean angle depends on its orientation within the plane, and the same Euclidean angle may have two different taxicab measures depending on its rotation. However, as we will show, there are certain special rotations and reflections in taxicab geometry that preserve distances.

\begin{lemma}\label{rot2}
Rotations of 2 $t$-radians counterclockwise about the origin are taxicab isometries.
\end{lemma}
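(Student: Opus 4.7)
The plan is to leverage Lemma \ref{twotrotations}, which tells us that a taxicab rotation by 2 $t$-radians about the origin coincides with a Euclidean rotation by $\frac{\pi}{2}$ radians. Since the Euclidean rotation by $\frac{\pi}{2}$ about the origin is the familiar map $(x,y)\mapsto(-y,x)$, we have an explicit coordinate formula for the transformation in question, and the problem reduces to a one-line verification.

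First I would fix two arbitrary points $A=(x_1,y_1)$ and $B=(x_2,y_2)$ in $\R^2$, and denote by $\rho$ the rotation of 2 $t$-radians counterclockwise about the origin. Invoking Lemma \ref{twotrotations}, I would write $\rho(A)=(-y_1,x_1)$ and $\rho(B)=(-y_2,x_2)$.

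Next I would compute the taxicab distance of the images directly:
\begin{align*}
d_T(\rho(A),\rho(B)) &= \lrvert{-y_1-(-y_2)}+\lrvert{x_1-x_2}\\
&= \lrvert{y_2-y_1}+\lrvert{x_1-x_2}\\
&= \lrvert{x_1-x_2}+\lrvert{y_1-y_2}\\
&= d_T(A,B),
\end{align*}
which establishes the isometry property. Since $A$ and $B$ were arbitrary, this suffices.

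There is really no substantial obstacle in this argument: the heavy lifting was done in Lemma \ref{twotrotations}, where the geometric description of a taxicab rotation by 2 $t$-radians was shown to coincide with the algebraic Euclidean quarter-turn formula. The only thing to watch is that, although the transformation is defined in taxicab-geometric terms (rotating along a taxicab circle by taxicab arclength), we are entitled to use its coordinate formula because Lemma \ref{twotrotations} has already identified the two maps as the same transformation of $\R^2$.
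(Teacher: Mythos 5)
Your proof is correct and takes essentially the same route as the paper: reduce the taxicab rotation to the Euclidean quarter-turn formula $(x,y)\mapsto(-y,x)$ and verify preservation of taxicab distance by a direct two-term computation. If anything, your citation of Lemma \ref{twotrotations} is the more precise justification for invoking that coordinate formula, since the paper's own proof appeals only to Lemma \ref{rightangdef}.
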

\begin{proof}
Let points $A=(x_1,y_1)$ and $B=(x_2,y_2)$. Additionally, let $f$ be the rotation of $\R^2$ about the origin by 2 $t$-radians. By Lemma \ref{rightangdef}, taxicab right angles are precisely Euclidean right angles, so $2$ $t$-radians $=\frac{\pi}{2}$ radians. We know that in Euclidean geometry, any point $P(x,y)$ maps to the point $P'(-y,x)$ when rotated $\frac{\pi}{2}$ radians about the origin. So, $f$ maps point $A$ to point $A'(-y_1,x_1)$ and point $B$ to point $B'(-y_2,x_2)$. Thus,

%\begin{align*}
%    d_T(f(A),f(B))&=d_T(A',B')\\
%    &=|-y_1-(-y_2)|+|x_1-x_2|\\
%%    &=|(-1)(y_1-y_2)|+|x_1-x_2|\\
%%    &=|-1||y_1-y_2|+|x_1-x_2|\\
%    &=|y_1-y_2|+|x_1-x_2|\\
%%    &=|x_1-x_2|+|y_1-y_2|\\
%    &=d_T(A,B)
%\end{align*}

\begin{align*}
    d_T(f(A),f(B))& = d_T(A',B') =|-y_1-(-y_2)|+|x_1-x_2|\\
    &=|y_1-y_2|+|x_1-x_2| =d_T(A,B)
\end{align*}

 Additionally,  Therefore, rotating points $\frac{\pi}{2}$ radians, or 2 $t$-radians, counterclockwise about the origin is a taxicab isometry.
\end{proof}

\begin{corollary}
For all $n\in\N$, rotations of $2n$ $t$-radians counterclockwise about the origin are taxicab isometries.
\end{corollary}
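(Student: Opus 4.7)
The plan is to prove this by induction on $n$, leveraging the two previous results: Lemma \ref{rot2} (which handles the base case $n=1$) and Theorem \ref{finitecombo} (which lets us compose isometries). The key observation is that a rotation of $2n$ $t$-radians counterclockwise about the origin can be realized as the $n$-fold composition of the single rotation of $2$ $t$-radians about the origin, since taxicab rotation is defined via arc length along the taxicab circle centered at the pivot, and those arc lengths add under composition.

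First I would let $R_k$ denote rotation by $2k$ $t$-radians counterclockwise about the origin, and handle the base case $n=1$ by citing Lemma \ref{rot2} directly. (If $\N$ in this paper includes $0$, the case $n=0$ is the identity map, which is trivially a taxicab isometry.) Then, for the inductive step, I would assume $R_n$ is a taxicab isometry and show that $R_{n+1} = R_1 \circ R_n$. This composition identity holds because rotating a point $P$ by $2n$ $t$-radians lands it on the same taxicab circle centered at the origin, and then rotating another $2$ $t$-radians advances it a further arc length of $2$ along that circle, for a total arc-length displacement of $2(n+1)$, which is precisely the definition of $R_{n+1}(P)$.

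Once the composition identity is in place, the conclusion is immediate from Theorem \ref{finitecombo}: the composition of the two taxicab isometries $R_n$ (by inductive hypothesis) and $R_1$ (by Lemma \ref{rot2}) is again a taxicab isometry, so $R_{n+1}$ is a taxicab isometry. By induction, $R_n$ is a taxicab isometry for every $n\in\N$.

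There is no real obstacle here; the mild point worth being careful about is the compositional identity $R_{n+1}=R_1\circ R_n$, which relies on Definition \ref{rotdef} and the fact that the image point of $R_n$ sits on the same taxicab circle as the original point. If one preferred a more computational route, one could instead observe that $2n$ $t$-radians equals $n\cdot\frac{\pi}{2}$ Euclidean radians by Lemma \ref{rightangdef}, so that $R_n$ coincides with the Euclidean rotation $(x,y)\mapsto(-y,x)$ iterated $n$ times, and then repeat the direct distance calculation from the proof of Lemma \ref{rot2}; but the inductive argument via Theorem \ref{finitecombo} is cleaner.
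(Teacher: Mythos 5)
Your proposal is correct and matches the paper's approach exactly: the paper dispatches this corollary in one line by citing Theorem \ref{finitecombo} and Lemma \ref{rot2}, which is precisely the composition-plus-induction argument you spell out. Your extra care in justifying the identity $R_{n+1}=R_1\circ R_n$ via arc lengths on the taxicab circle is a reasonable elaboration of a step the paper leaves implicit.
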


This result follows quite naturally from \ref{finitecombo} and \ref{rot2}.

\begin{lemma}\label{refy=x}
Reflections across the lines $y=x$, $y=-x$, $y=0$, and $x=0$ are taxicab isometries.
\end{lemma}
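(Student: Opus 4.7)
The plan is to leverage Lemma \ref{y=x}, which tells us that taxicab reflections across each of the four lines $y=x$, $y=-x$, $y=0$, and $x=0$ coincide exactly with their Euclidean counterparts. This gives us closed-form coordinate expressions for each reflection, after which verifying the isometry property becomes a direct computation on the taxicab distance formula \eqref{taxicabmetric}, analogous to the one carried out in Lemma \ref{rot2}.

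First I would set up the proof by letting $A=(x_1,y_1)$ and $B=(x_2,y_2)$ be arbitrary points in $\R^2$, and letting $f$ denote the reflection across the line in question. Using Lemma \ref{y=x}, I would write down the image under $f$ for each of the four cases: across $y=x$, $f(A)=(y_1,x_1)$ and $f(B)=(y_2,x_2)$; across $y=-x$, $f(A)=(-y_1,-x_1)$ and $f(B)=(-y_2,-x_2)$; across $y=0$, $f(A)=(x_1,-y_1)$ and $f(B)=(x_2,-y_2)$; and across $x=0$, $f(A)=(-x_1,y_1)$ and $f(B)=(-x_2,y_2)$.

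Then I would compute $d_T(f(A),f(B))$ in each case and show it equals $|x_1-x_2|+|y_1-y_2|=d_T(A,B)$. For example, across $y=x$,
\[
d_T(f(A),f(B)) = |y_1-y_2|+|x_1-x_2| = d_T(A,B),
\]
and across $y=-x$,
\[
d_T(f(A),f(B)) = |-y_1-(-y_2)|+|-x_1-(-x_2)| = |y_1-y_2|+|x_1-x_2| = d_T(A,B),
\]
using the fact that $|-a|=|a|$ for any real $a$. The two axis reflections are handled identically, since negating one coordinate does not change its absolute-value contribution to the sum.

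There is not really a hard step here: the only mild subtlety is that without Lemma \ref{y=x} in hand, we would have to justify the coordinate formulas themselves from Definition \ref{refdef}, which is a more delicate argument. Since that work has already been done, the present lemma reduces to four short computations of the type above, after which the conclusion that each map satisfies Definition \ref{distancepresmap} is immediate.
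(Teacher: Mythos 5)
Your proposal is correct and follows essentially the same route as the paper: both invoke Lemma \ref{y=x} to obtain the Euclidean coordinate formulas for the four reflections and then verify the taxicab distance is preserved by a direct computation with the formula \eqref{taxicabmetric}. The only difference is that you carry out the computation explicitly for all four lines, whereas the paper does so only for $y=x$ and leaves the remaining three as routine.
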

\begin{proof}
Let point $A$ be the point $(x_1,y_1)$ and point $B$ be the point $(x_2,y_2)$. Additionally, let $f$ be a reflection of $\R^2$ across the line $y=x$. By \ref{y=x}, we know taxicab reflections across these lines are exactly Euclidean reflections. 

In Euclidean geometry, reflections across the line $y=x$ map any point $P(x,y)$ to the point $P'(y,x)$. So, $f$ maps point $A$ to point $A'(y_1,x_1)$ and point $B$ to point $B'(y_2,x_2)$. Thus,

\begin{align*}
    d_T(f(A),f(B))&=d_T(A',B') =|y_1-y_2|+|x_1-x_2|\\
    &=|x_1-x_2|+|y_1-y_2| =d_T(A,B)
\end{align*}

 Therefore, reflections across the line $y=x$ are taxicab isometries. Because reflections across these lines $y=-x$, $y=0$, and $x=0$ are the same in taxicab and Euclidean geometries, we also have the following Euclidean mapping rules on points:
    \begin{itemize}
        \item   The image of point $P(x,y)$, reflected across $y=-x$ is $P'(-y,-x)$.
        \item   The image of point $P(x,y)$, reflected across $y=0$ is $P'(x,-y)$.
        \item   The image of point $P(x,y)$, reflected across $x=0$ is $P'(-x,y)$.
    \end{itemize}
With these, it is straightforward to show that taxicab distances are preserved by these reflections, similar to the proof for reflections across the line $y=x$.
\end{proof}

The lemmas above, with Theorem \ref{finitecombo}, coalesce into the theorem below.
\begin{theorem}\label{refrottrans}
    Any combination of a finite number of these taxicab isometries is also a taxicab isometry:
    \begin{itemize}
        \item   translations in the plane,
        \item   rotations by $2n$ $t$-radians (for all integers $n$),
        \item   reflections across the lines $y=x$, $y=-x$, $y=0$, $x=0$.
    \end{itemize}
\end{theorem}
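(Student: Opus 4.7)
The plan is that this theorem is essentially a bookkeeping statement that ties together everything established in the section, so the proof should be a short citation-style argument rather than any fresh calculation. All the real work has already been done piecewise: Lemma \ref{trans} handles translations, Lemma \ref{rot2} together with its corollary handles rotations of $2n$ $t$-radians about the origin for every integer $n$, and Lemma \ref{refy=x} handles reflections across each of the four special lines $y=x$, $y=-x$, $y=0$, and $x=0$. Each of these transformations has therefore already been verified to satisfy Definition \ref{distancepresmap}.

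To assemble the theorem itself, I would just invoke Theorem \ref{finitecombo}. That theorem states that the composition of any finite collection of taxicab isometries is again a taxicab isometry. Since each of the three bulleted types of maps is a taxicab isometry by the lemmas listed above, any finite composition drawn from these types is a finite composition of taxicab isometries, and so by Theorem \ref{finitecombo} is itself a taxicab isometry. One sentence per bullet, followed by a single sentence applying Theorem \ref{finitecombo}, should suffice.

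There is no genuine obstacle here; the only thing to be careful about is making sure the statement of the theorem is exactly what the lemmas support. In particular, the rotation bullet is phrased for all integers $n$, while Lemma \ref{rot2} and its corollary are stated for counterclockwise rotations indexed by $n\in\mathbb{N}$. If one wants to cover negative $n$ as well, it is worth a brief remark that a clockwise rotation by $2|n|$ $t$-radians is the inverse of the counterclockwise one and can be written as a composition of three counterclockwise rotations of $2$ $t$-radians (so that $4\cdot 2 = 8$ $t$-radians brings us full circle), which keeps us inside the class of finite compositions already known to be isometries by Theorem \ref{finitecombo}. With that caveat addressed, the theorem follows immediately.
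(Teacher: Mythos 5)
Your proposal is correct and matches the paper's own (one-line) argument exactly: the paper simply states that Lemmas \ref{trans}, \ref{rot2}, and \ref{refy=x} together with Theorem \ref{finitecombo} yield the result. Your additional remark handling negative $n$ via compositions of counterclockwise $2$ $t$-radian rotations (using that $8$ $t$-radians is a full turn) is a gap the paper itself leaves unaddressed, and is worth keeping.
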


\section{The Nature of Isosceles Triangles in Pure Taxicab Geometry}
A triangle is called isosceles if it has two sides of equivalent length. Euclid's proposition I.5 transfers the property of side length equivalence to the angles opposite of those sides. Heath's translation is as follows (from \cite{heath}).

\begin{theorem}[Euclid's Proposition I.5]
In isosceles triangles the angles at the base are equal to one another and, if the equal straight lines be produced further, the angles under the base will be equal to one another.
\end{theorem}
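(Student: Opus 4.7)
My plan is to adapt the classical mirror-reflection proof of Euclid's Proposition I.5 to the taxicab setting, using only the isometries catalogued in Theorem \ref{refrottrans}. The central tension is that the Euclidean argument invokes a reflection across the perpendicular bisector of the base, while in taxicab geometry only reflections across $y=x$, $y=-x$, $x=0$, and $y=0$ are guaranteed to preserve distances, and rotations are only available in multiples of $2$ $t$-radians.

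First, let $\triangle ABC$ be a taxicab isosceles triangle with $d_T(A,B)=d_T(A,C)$ and base $BC$, and let $M$ be the midpoint of $BC$. I would translate so that $M$ sits at the origin, and then, under the hypothesis that $BC$ is parallel to one of the coordinate axes, rotate by a suitable multiple of $2$ $t$-radians to place $BC$ along the $x$-axis. In this canonical configuration $B=(-b,0)$ and $C=(b,0)$ for some $b>0$, and the isosceles condition reduces to $|x_A+b|=|x_A-b|$, forcing $x_A=0$. Hence $A$ lies on the $y$-axis.

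Next, I would reflect across the line $x=0$, which is a taxicab isometry by Lemma \ref{refy=x}. This reflection fixes $A$ and $M$ and interchanges $B$ with $C$, so it carries the angle $\angle ABC$ onto $\angle ACB$. Because the reflection preserves taxicab distances, it preserves arc length along the unit taxicab circle about any vertex, and hence preserves taxicab angle measure via Definition \ref{tradian}; this yields $\angle ABC=\angle ACB$. The ``angles under the base'' are handled by applying the same reflection to the extensions of $AB$ past $B$ and of $AC$ past $C$.

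The main obstacle is articulating exactly what the ``certain circumstances'' are. Taxicab angles are not rotation invariant, and the taxicab equidistant locus $\{P:d_T(P,B)=d_T(P,C)\}$ is in general a two-dimensional region rather than a single line, so an isosceles apex need not sit on a clean perpendicular bisector of $BC$. I expect the argument above to require the base $BC$ to be parallel to a coordinate axis (together with a natural placement of the apex on the resulting axis of symmetry), with the analogous cases where $BC$ is parallel to $y=x$ or $y=-x$ handled by the same recipe using the corresponding diagonal reflection; orientations of $BC$ outside of these four preferred directions must be excluded, since no composition of the available translations and $2n$ $t$-radian rotations can bring them into the canonical form required for the reflection argument.
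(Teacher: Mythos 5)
Your symmetry argument is sound as far as it goes: translating the base midpoint to the origin, aligning a horizontal base, forcing the apex onto the $y$-axis, and reflecting across $x=0$ (a taxicab isometry by Lemma \ref{refy=x}, which carries taxicab circles about $B$ to taxicab circles about $C$ and hence preserves $t$-radian measure) does prove base-angle equality for those configurations, and the analogous $y=\pm x$ reflections handle the diagonal bases. These are exactly the configurations the paper isolates in cases (1) and (2) of Proposition \ref{i5t} (there the apex, not the base midpoint, is placed at the origin, so ``same quadrant'' with $x_0=y_1$, $x_1=y_0$ is your $y=x$-symmetric triangle, and ``adjacent quadrants'' with matching components is your axis-symmetric one). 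But your final claim --- that orientations outside the four preferred directions ``must be excluded'' --- is wrong, and it is wrong for a reason your method cannot detect: the failure of a particular isometry-based proof strategy does not show the conclusion fails. The paper instead \emph{computes} the two base angles directly from Theorem \ref{taxicabanglestdpos}, obtaining explicit expressions such as $\alpha = 3 - 2x_0/r$ and $\beta = 3 - 2y_1/r$, and reads off exactly when they coincide; this yields both directions of the ``if and only if.''

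The concrete gap is case (3) of Proposition \ref{i5t}: when $P$ and $Q$ lie in opposing quadrants relative to the apex $O$ (the apex angle spans three quadrants), the computation gives $\alpha=\frac{y_0-y_1}{r}$ and $\beta=\frac{x_1-x_0}{r}$, which are \emph{always} equal because $x_0+y_0=x_1+y_1=r$. For instance, $O=(0,0)$, $P=(3,1)$, $Q=(-2,-2)$ is isosceles with equal base angles, yet the base $PQ$ has Euclidean slope $3/5$ and the triangle admits no reflection symmetry from the list in Theorem \ref{refrottrans}; it is a direct counterexample to your exclusion claim. So the ``certain circumstances'' are genuinely larger than the symmetric configurations: symmetry is sufficient but not necessary, and any complete treatment has to fall back on the angle-measure formula rather than on Euclid's mirror argument.
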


%The first proposition we looked at was proposition 5, from book I of Euclid's Elements. As stated in (citation here), the proposition goes as follows, ``In isosceles triangles the angles at the base are equal to one another and, if the equal straight lines be produced further, the angles under the base will be equal to one another."

Using Theorem \ref{refrottrans}, we will view an isosceles triangle as a pair of radii of a taxicab circle centered at the origin, such that the vertex of the angle formed by those two sides is the center of the circle, and the third side is the line segment between the points where the radii meet the sides of the circle. Henceforth, in $\R^2$, when we refer to \textit{adjacent quadrants}, we will mean a pair of quadrants that share an axis, while \textit{opposing quadrants} will be a pair that does not. Furthermore, we shall consider any part of an axis as included in each of the quadrants to which it is adjacent. 

\begin{proposition}[I.5T]\label{i5t}
Suppose we have triangle $\triangle PQO$, where $PO\cong QO$ and $O$ is the origin.
\begin{enumerate}
    \item   If $P$ and $Q$ are in the same quadrant, then $\angle PQO\cong\angle QPO$ if and only if the horizontal component of $P$ equals the vertical component of $Q$, and the horizontal component of $Q$ equals the vertical component of $P$. %$x_0=y_1$ and $x_1=y_0$.
    % If the horizontal distance from $A$ to $C$ is equal to the vertical distance from $B$ to $C$,...?
    \item   If $P$ and $Q$ are in adjacent quadrants, then $\angle PQO\cong\angle QPO$ if and only if $P$ and $Q$ have equivalent horizontal components and equivalent vertical components. %x_0=x_1$ and $y_0=y_1$. 
    \item   If $P$ and $Q$ are in opposing quadrants, then $\angle PQO\cong\angle QPO$.
\end{enumerate}
\end{proposition}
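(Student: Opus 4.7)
The plan is to use Theorem~\ref{refrottrans} to rotate and reflect each configuration into a canonical position near the origin, and then to compute $\angle QPO$ and $\angle PQO$ directly from Definition~\ref{tradian}. Since $OP\cong OQ$ has taxicab length $r$, both $P$ and $Q$ lie on the taxicab circle of radius $r$ centered at $O$, hence each on one of the four sides $S_i$ of Figure~\ref{circle}; a point on $S_1$ may be written $(a,r-a)$ with $a\in[0,r]$, and analogously on the other sides. To extract a t-radian measure at a vertex $V$, I would center a unit taxicab circle at $V$, find where each of the two rays leaving $V$ meets it (a ray of direction $(d_x,d_y)$ meets it, in coordinates relative to $V$, at $(d_x,d_y)/(|d_x|+|d_y|)$), identify which side $S_i^V$ each intersection lies on, and compute the short arc length between them; on a single side, this reduces to $2|u_1-u_2|$ via $u+v=\pm 1$.

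For Case~(1), these isometries let me place both $P$ and $Q$ on $S_1$, say $P=(a,r-a)$ and $Q=(b,r-b)$ with $a<b$. The ray $PQ$ has direction $(1,-1)$ and meets $S_4^P$, while $PO$ meets $S_3^P$; the arc length between these intersections (passing through the unit-circle vertex $(0,-1)$) comes out linear in $a/r$. The analogous bookkeeping at $Q$ makes $\angle PQO$ linear in $b/r$, and setting the two equal reduces to $a+b=r$, which is exactly the claim that $P$'s horizontal coordinate equals $Q$'s vertical coordinate and vice versa. For Case~(3), I would put $P=(a,r-a)$ on $S_1$ and $Q=(-b,-(r-b))$ on $S_3$. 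Now both rays at $P$ head down-left into the third quadrant relative to $P$, so each meets $S_3^P$, and a single-side arc computation gives $\angle QPO=|a-b|/r$. The mirror-image situation at $Q$, where both rays meet $S_1^Q$, yields the same value for $\angle PQO$, so the two angles are equal unconditionally.

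Case~(2) is the most delicate. After normalization, $P=(a,r-a)$ lies on $S_1$ and $Q=(-c,r-c)$ lies on $S_2$. Here the side of the unit circle at $P$ that the ray $PQ$ enters depends on the sign of $a-c$: if $a>c$ it hits $S_2^P$, if $a<c$ it hits $S_3^P$, and if $a=c$ the ray is horizontal and meets the unit circle at the vertex $(-1,0)$ between those two sides. A symmetric trichotomy plays out at $Q$. In each subcase the arc-length procedure yields closed-form expressions for $\angle QPO$ and $\angle PQO$, and comparing them shows the two angles coincide exactly when $a=c$, which is the stated horizontal/vertical component condition. The main obstacle will be the subcase bookkeeping in Case~(2): tracking across all three regimes which side $S_i^V$ each ray crosses, and confirming in each that the short arc between the two intersections really is the interior angle at that vertex rather than its reflex. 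A secondary care point is the boundary configurations where a point lies on an axis, since these straddle two of the three cases and must be verified to produce consistent measures.
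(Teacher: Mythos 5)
Your proposal is correct, and I checked that your arc-length computations reproduce the paper's angle formulas exactly (e.g.\ in Case (1) your $\angle QPO = 1 + 2a/r$ at $P=(a,r-a)$ is the paper's $\beta = 3 - 2y_1/r$, and in Case (2) your single-side computation at $P$ gives the paper's $\alpha = \tfrac{2(x_0+x_1)}{c} - \tfrac{2x_0}{r}$). The case decomposition via Theorem~\ref{refrottrans} is the same as the paper's, but your computational engine differs: the paper invokes Theorem~\ref{taxicabanglestdpos} and decomposes each base angle into reference angles read off from dropped perpendiculars (the auxiliary angles $\gamma$, $\phi$, $\lambda$ in Figures~\ref{fig:2}--\ref{fig:4}), whereas you work directly from Definition~\ref{tradian}, intersecting the two rays at each vertex with a unit taxicab circle centered there and measuring the arc between the intersection points. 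Your route is more self-contained --- it essentially re-derives the needed instances of the Thompson--Dray formula rather than citing it --- and it trades the paper's reference-angle bookkeeping for side-of-the-circle bookkeeping (which side $S_i^V$ each ray meets, and through which corner of the unit circle the arc passes). In Case (2) you handle the trichotomy $a\gtrless c$ explicitly where the paper disposes of one branch by a without-loss-of-generality symmetry; either is fine, and your subcase $c=r$ (where $Q$ lands on the negative $x$-axis) correctly reappears as the paper's spurious solution $y_1=0$, which is absorbed into Case (3). Your two flagged care points (interior versus reflex arc, and axis boundary configurations) are genuine but benign: all arcs that arise measure at most $3$ $t$-radians, well below the half-circle value of $4$, and the axis cases are consistent by the convention that axes belong to both adjacent quadrants.
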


\begin{proof}
Let $\triangle PQO$ be a triangle in $\R^2$ where $O$ is the origin and $PO\cong QO$, $\angle POQ=\theta$ with $\theta<4$ t-radians. Observe that points $P,Q$ both lie on the taxicab circle with radius $r\ne0$ centered at the origin, and radii $OP,OQ$ have length $r$. We assume $\triangle PQO$ is in one of these three orientations, for if it is not then some combination of isometries listed in Theorem \ref{refrottrans} will result in one of these cases without altering distances: 
\begin{enumerate}
    \item   $\theta$ is completely contained within quadrant I.
    \item   $\theta$ is split across quadrants I and II.
    \item   $\theta$ is split across quadrants I, II, and III.
\end{enumerate}
%\begin{enumerate}
    
    Suppose $\theta$ is completely contained within quadrant I, and so let $P(x_1,y_1)$ and $Q(x_0,y_0)$ lie within quadrant I, where $x_0,x_1,y_0,y_1\in\R$, $x_0,x_1,y_0,y_1\geq0$, and $x_1>x_0,y_1>y_0$. See Figure \ref{fig:2} below, where equivalent angles can be identified using Theorem \ref{refrottrans} and angle measurements are in $t$-radians. 
    
    \begin{figure}[h]\begin{tikzpicture}[scale=1.667]
    \draw[thick,black](-0.25,0)--(4.25,0);
    \draw[thick,black](0,-0.25)--(0,4.25);
    %\draw[thick,black](-4,-0.25)--(-4,0.25);
    %\node at (-4,-0.5) {$-1$};
    \draw[thick,black](4,-0.25)--(4,0.25);
    %\node at (4,-0.5) {$1$};
    %\draw[thick,black](-0.25,-4)--(0.25,-4);
    %\node at (0.667,-4) {$-1$};
    \draw[thick,black](-0.25,4)--(0.25,4);
    %\node at (0.667,4) {$1$};
    \draw[thick,black](0,4)--(1.5,2.5);
    \draw[thick,black](3,1)--(4,0);
    %\draw[thick,black](0,-4)--(4,0);
    %\draw[thick,black](0,4)--(-4,0);
    %\draw[thick,black](0,-4)--(-4,0);
    \draw[thick,red](0,0)--(3,1);
    \node[red] at (1.833,0.45) {$r$}; %b
    \draw[thick,blue](0,0)--(1.5,2.5);
    \node[blue] at (0.667,1.5) {$r$}; %a
    \draw[thick,\darkgreen](1.5,2.5)--(3,1);
    \node[\darkgreen] at (2.375,2) {$c$};
    \filldraw [black] (1.5,2.5) circle (2pt);
    \node at (2,2.867) {$P(x_1,y_1)$};
    \node[red] at (1.625,2.125) {$\beta$};
    \draw[thick,red] (1.2,2) arc
    [
        start angle=240,
        end angle=315,
        x radius=0.583cm,
        y radius=0.583cm
    ] ;
    \filldraw [black] (3,1) circle (2pt);
    \node at (3.5,1.333) {$Q(x_0,y_0)$};
    \node[blue] at (2.625,1.125) {$\alpha$};
    \draw[thick,blue] (2.6,1.4) arc
    [
        start angle=135,
        end angle=198,
        x radius=0.566cm,
        y radius=0.566cm
    ] ;
    \node[\darkgreen] at (0.4,0.35) {{\Large $\theta$}};
    \draw[thick,black,dashed](1.5,2.5)--(1.5,0);
    \draw[thick,black](1.75,0.25)--(1.5,0.25); %right angle
    \draw[thick,black](1.75,0.25)--(1.75,0); %right angle
    \draw[thick,black,dashed](3,1)--(0,1); 
    \draw[thick,black,dashed](0,2.5)--(3,2.5);
    \draw[thick,black,dashed](3,2.5)--(3,0);
    \draw[thick,black](0,1.25)--(0.25,1.25); %right angle
    \draw[thick,black](0.25,1.25)--(0.25,1); %right angle
    \draw[thick,black](0,2.25)--(0.25,2.25); %right angle
    \draw[thick,black](0.25,2.25)--(0.25,2.5); %right angle
    \draw[thick,black](2.75,0.25)--(3,0.25); %right angle
    \draw[thick,black](2.75,0.25)--(2.75,0); %right angle
    \node at (1.5,-0.125) {$x_0$};
    \node at (-0.125,1.25) {$y_1$};
    \node at (3.5,-0.125) {$y_0$};
    \node at (-0.125,3.25) {$x_1$};
    \node at (0.75,2.625) {$x_1$};
    \node at (3.125,0.5) {$y_0$};
    \draw[thick, \darkgreen, domain=18:60] plot ({0.9*cos(\x)},{0.9*sin(\x)});
%     \draw[thick,\darkgreen] (0.9,0.29) arc
%    [
%        start angle=18,
%        end angle=60,
%        x radius=0.9cm,
%        y radius=0.9cm
%    ] ;
    \draw[thick,orange] (1,0) arc
    [
        start angle=0,
        end angle=18,
        x radius=1cm,
        y radius=1cm
    ] ;
    \draw[thick,orange] (2,1) arc
    [
        start angle=180,
        end angle=198,
        x radius=1cm,
        y radius=1cm
    ] ;
    \draw[thick,violet] (.4,.667) arc
    [
        start angle=60,
        end angle=90,
        x radius=.777cm,
        y radius=.777cm
    ] ;
    \draw[thick,violet] (1,1.667) arc
    [
        start angle=240,
        end angle=270,
        x radius=.972cm,
        y radius=.972cm
    ] ;
    \node[orange] at (0.75,0.125) {$\gamma$};
    \node[orange] at (2.25,0.875) {$\gamma$};
    \node[violet] at (0.125,0.5) {$\phi$};
    \node[violet] at (1.25,1.75) {$\phi$};
    \draw[thick,black] (1.5,1.5) arc
    [
        start angle=270,
        end angle=315,
        x radius=1cm,
        y radius=1cm
    ] ;
    \draw[thick,black] (2.25,1.75) arc
    [
        start angle=135,
        end angle=181,
        x radius=1cm,
        y radius=1cm
    ] ;
    \node at (2.25,1.25) {1};%{$\frac{\pi}{4}$};
    \node at (1.75,1.75) {1};%{$\frac{\pi}{4}$};
    \filldraw[black] (0,0) circle (2pt);
    \node at (-0.5,-0.25) {$O(0,0)$};
\end{tikzpicture}\caption{$\theta$ is completely contained within quadrant I}
\label{fig:2}
\end{figure}

Additionally, notice that $r = x_1+y_1 = x_0+y_0$. Thus, using Definition \ref{taxicabanglestdpos}

\begin{minipage}{2in}
\begin{align*}
%    \alpha_e&=\gamma_e+\frac{\pi}{4}\\
    \alpha&=\gamma+1\\
    &=2-\frac{2}{1+\text{tan}_e(\gamma_e)}+1\\
    &=3-\frac{2}{1+\frac{y_0}{x_0}}\\
%    &=3-\frac{2x_0}{x_0+y_0}\\
    &=3-\frac{2x_0}{r}
%    & \phantom{\frac{2x_0}{r}}   \\
\end{align*}
\end{minipage}\hspace{2cm}
\begin{minipage}{2in}
\begin{align*}
%    \beta_e&=\phi_e+\frac{\pi}{4}\\
    \beta&=\phi+1\\
    &=2-\frac{2}{1+\text{tan}_e(\phi_e)}+1\\
    &=3-\frac{2}{1+\frac{x_1}{y_1}}\\
%    &=3-\frac{2y_1}{y_1+x_1}\\
    &=3-\frac{2y_1}{r}.
%    &=3-\frac{2y_1}{b}\\
\end{align*}
\end{minipage}

We see that $\alpha=\beta$ only when $x_0=y_1$. Furthermore, $0 =x_0-y_1 =x_0-r+r-y_1 =-y_0+x_1$ implies that $x_1 = y_0$.
%\begin{align*}
%    0&=x_0-y_1\\
%    &=x_0-r+r-y_1\\
%%    &=x_0-y_1+a-b\\
%%    &=x_0-b+a-y_1\\
%    &=x_0-(x_0+y_0)+(x_1+y_1)-y_1\\
%    &=-y_0+x_1\\
%    &=x_1-y_0
%\end{align*}
%and thus $x_0=y_1$ if and only if $x_1=y_0$.
Therefore, in the case where $\theta$ is completely contained within quadrant I, the base angles of isosceles $\triangle PQO$, $\angle PQO$ and $\angle QPO$ are equal only when $x_0=y_1$ and $x_1=y_0$, where $P=(x_0,y_0)$ and $Q(x_1,y_1)$.

Now, suppose $\theta$ is split across quadrants I and II. Without loss of generality, assume $y_0\geq y_1$ and $P=(x_0,y_0)$ and $Q=(-x_1,y_1)$. Then,  $x_0,x_1,y_0,y_1\in\R$, $x_0,x_1,y_0,y_1\geq0$, $x_1\geq x_0$. See Figure \ref{fig:3} below, where equivalent angles can be identified using Theorem \ref{refrottrans} and angle measurements are in $t$-radians.
    
    \begin{figure}[h]
       \begin{tikzpicture}[scale=1.4]
    \draw[thick,black](-4.25,0)--(4.25,0);
    \draw[thick,black](0,-0.25)--(0,4.25);
    \draw[thick,black](-4,-0.25)--(-4,0.25);
    %\node at (-4,-0.5) {$-1$};
    \draw[thick,black](4,-0.25)--(4,0.25);
    %\node at (4,-0.5) {$1$};
    %\draw[thick,black](-0.25,-4)--(0.25,-4);
    %\node at (0.667,-4) {$-1$};
    \draw[thick,black](-0.25,4)--(0.25,4);
    %\node at (0.667,4) {$1$};
    \draw[thick,black](0,4)--(4,0);
    %\draw[thick,black](0,-4)--(4,0);
    \draw[thick,black](0,4)--(-4,0);
    %\draw[thick,black](0,-4)--(-4,0);
    \draw[thick,red](0,0)--(1.5,2.5);
    \node[red] at (0.875,1.2) {$r$}; %b
    \draw[thick,blue](0,0)--(-3,1); 
    \node[blue] at (-1.625,0.375) {$r$}; %a
    \draw[thick,\darkgreen](-3,1)--(1.5,2.5);
    \node[\darkgreen] at (-0.75,2) {$c$};
    \filldraw [black] (1.5,2.5) circle (2pt);
    \node at (2,2.867) {$P(x_0,y_0)$};
    \filldraw [black] (-3,1) circle (2pt);
    \node at (-3.65,1.333) {$Q(-x_1,y_1)$};
    \draw[thick,black,dashed](-3,0)--(-3,2.5)--(1.5,2.5)--(1.5,0);
    \draw[thick,black,dashed](-3,1)--(3,1);
    \node at (0.75,-0.125) {$x_0$};
    \node at (1.625,1.25) {$y_0$};
    \node at (-1.5,-0.125) {$x_1$};
    \node at (-3.125,0.5) {$y_1$};
    \node at (-3.5,-0.125) {$y_1$};
    \node at (2.75,-0.125) {$y_0$};
    \draw[thick, \darkgreen, domain=59:162] plot ({0.9*cos(\x)},{0.9*sin(\x)});
    \draw[thick,violet] (0.7,0) arc
    [
        start angle=0,
        end angle=59,
        x radius=0.7cm,
        y radius=0.7cm
    ] ;
    \draw[thick,violet] (0.125,2.5) arc
    [
        start angle=180,
        end angle=239,
        x radius=1.375cm,
        y radius=1.375cm
    ] ;
    \draw[thick,orange] (-0.9,0.3) arc
    [
        start angle=162,
        end angle=180,
        x radius=0.949cm,
        y radius=0.949cm
    ] ;
    \draw[thick,orange] (-2.1,0.7) arc
    [
        start angle=342,
        end angle=360,
        x radius=0.949cm,
        y radius=0.949cm
    ] ;
    \draw[thick,yellow] (-2,1) arc
    [
        start angle=0,
        end angle=18,
        x radius=1cm,
        y radius=1cm
    ] ;
    \draw[thick,yellow] (0.5,2.5) arc
    [
        start angle=180,
        end angle=198,
        x radius=1cm,
        y radius=1cm
    ] ;
    \draw[thick,blue] (0.6,2.2) arc
    [
        start angle=198,
        end angle=239,
        x radius=0.949cm,
        y radius=0.949cm
    ] ;
    \draw[thick,red] (-1.5,0.5) arc
    [
        start angle=342,
        end angle=378,
        x radius=1.581cm,
        y radius=1.581cm
    ] ;
    \node[violet] at (0.5,1.875) {$\phi$};
    \node[violet] at (0.375,0.25) {$\phi$};
    \node[orange] at (-0.75,0.125) {$\gamma$};
    \node[orange] at (-2.25,0.875) {$\gamma$};
    \node[yellow] at (-2.25,1.125) {$\lambda$};
    \node[yellow] at (0.6875,2.375) {$\lambda$};
    \node[red] at (-1.75,1.125) {$\beta$};
    \node[blue] at (0.875,2) {$\alpha$};
    \node[\darkgreen] at (-0.25,0.375) {$\theta$};
    \draw[thick,black](1.25,0)--(1.25,0.25)--(1.5,0.25);
    \draw[thick,black](-2.75,0)--(-2.75,0.25)--(-3,0.25);
    \draw[thick,black](-0.25,2.5)--(-0.25,2.25)--(0,2.25);
    \draw[thick,black](-0.25,1)--(-0.25,1.25)--(0,1.25);
    \filldraw[black] (0,0) circle (2pt);
    \node at (-0.5,-0.25) {$O(0,0)$};
    \end{tikzpicture}\caption{$\theta$ is split across quadrants I and II}
    \label{fig:3}
    \end{figure}

Observe that $r=x_0+y_0=x_1+y_1$, $c = x_1 + x_0 + y_0 - y_1$ and %Additionally, notice that $a=b=r$. Thus,
\begin{align*}
%    \alpha_e&=\phi_e-\lambda_e\\
    \alpha =\phi-\lambda 
    &=2-\frac{2}{1+\text{tan}_e(\phi_e)}-\left(2-\frac{2}{1+\text{tan}_e(\lambda_e)}\right)\\
    &=\frac{2}{1+\frac{y_0-y_1}{x_0+x_1}}-\frac{2}{1+\frac{y_0}{x_0}}\\
    &=\frac{2(x_0+x_1)}{y_0-y_1+x_0+x_1}-\frac{2x_0}{x_0+y_0}\\
    &=\frac{2(x_0+x_1)}{c}-\frac{2x_0}{r}
\end{align*}

\begin{align*}
%    \beta_e&=\gamma_e+\lambda_e\\
    \beta = \gamma+\lambda &=2-\frac{2}{1+\text{tan}_e(\gamma_e)}+2-\frac{2}{1+\text{tan}_e(\lambda_e)}\\
    &=4-\frac{2}{1+\frac{y_1}{x_1}}-\frac{2}{1+\frac{y_0-y_1}{x_0+x_1}}\\
    &=4-\frac{2x_1}{x_1+y_1}-\frac{2(x_0+x_1)}{x_0+x_1+y_0-y_1}\\
    &=4-\frac{2(x_0+x_1)}{c}-\frac{2x_1}{r}.
\end{align*}

We wish to know the conditions for which $\beta - \alpha = 0$. Note that $c\ne0$ and since $r=x_0+y_0 = x_1+y_1$ then $c = x_1 + x_0 + y_0 - y_1 = 2x_1$.

\begin{align*}
    0 = \beta - \alpha
    &=4-\frac{2(x_0+x_1)}{c}-\frac{2x_1}{r}-\left(\frac{2(x_0+x_1)}{c}-\frac{2x_0}{r}\right)\\
%    &=4-\frac{4(x_0+x_1)}{c}+\frac{2(x_0-x_1)}{b}\\
    &=4-\frac{4r(x_0+x_1)+2c(x_0-x_1)}{cr}\\
    &=4cr-4r(x_0+x_1)+2c(x_0-x_1)\\
    &=2cr-2r(x_0+x_1)+c(x_0-x_1)\\
    &=2(x_1+y_1)(2x_1)-2(x_1+y_1)(x_0+x_1)+(2x_1)(x_0-x_1)\\
    &=(x_1+y_1)(2x_1)-(x_1+y_1)(x_0+x_1)+(x_1)(x_0-x_1)\\
    &=2{x_1}^2+2x_1y_1-x_0x_1-{x_1}^2-x_0y_1-x_1y_1+x_0x_1-{x_1}^2\\
%    &=x_1y_1-x_0y_1\\
    &=(x_1-x_0)y_1\\
    %0&=x_1-x_0\\
\end{align*}

We see that $\alpha=\beta$ is only true when $y_1=0$ or $x_0=x_1$. If $y_1 = 0$, then $Q$ is on an axis at point $(-r,0)$ and is handled by case 3. So, for now, we only need to consider when $x_0 = x_1$. Notice that $0 = x_1-x_0 = x_1-r+r-x_0 = y_0-y_1$ and thus $x_0=x_1$ if and only if $y_0=y_1$.

%\begin{align*}
%    0&=x_1-x_0\\
%    &=x_1+r-r-x_0\\
%%    &=x_1-x_0+b-a\\
%%    &=x_1-a+b-x_0\\
%    &=x_1-(x_1+y_1)+(x_0+y_0)-x_0\\
%%    &=-y_1+y_0\\
%    &=y_0-y_1\\
%\end{align*}

Therefore, in the case where $\theta$ is split across quadrants I and II, the base angles of isosceles $\triangle PQO$, $\angle PQO$ and $\angle QPO$ are equal only when when $x_0=x_1$ and $y_0=y_1$, where $P=(x_0,y_0)$ and $Q(-x_1,y_1)$.

Lastly, suppose $\theta$ is split across quadrants I, II, and III. For clarity and conciseness, assume $P(x_0,y_0)$ lies in quadrant I and $Q(-x_1,-y_1)$ lies in quadrant III, where $x_0,x_1,y_0,y_1\in\R$, $x_0,x_1,y_0,y_1\geq0$, $x_1>x_0$, and $y_0>y_1$. See Figure \ref{fig:4} below.

\begin{figure}[h]
\begin{tikzpicture}[scale=1.4]
    \draw[thick,black](-4.25,0)--(4.25,0);
    \draw[thick,black](0,-4.25)--(0,4.25);
    \draw[thick,black](-4,-0.25)--(-4,0.25);
    %\node at (-4,-0.5) {$-1$};
    \draw[thick,black](4,-0.25)--(4,0.25);
    %\node at (4,-0.5) {$1$};
    \draw[thick,black](-0.25,-4)--(0.25,-4);
    %\node at (0.667,-4) {$-1$};
    \draw[thick,black](-0.25,4)--(0.25,4);
    %\node at (0.667,4) {$1$};
    \draw[thick,black](0,4)--(4,0);
    \draw[thick,black](0,-4)--(4,0);
    \draw[thick,black](0,4)--(-4,0);
    \draw[thick,black](0,-4)--(-4,0);
    \draw[thick,red](0,0)--(1.5,2.5);
    \node[red] at (0.85,1.1) {$r$}; %b
    \draw[thick,blue](0,0)--(-3,-1);
    \node[blue] at (-1.5,-0.65) {$r$}; %a
    \draw[thick,\darkgreen](-3,-1)--(1.5,2.5);
    \node[\darkgreen] at (-0.75,1) {$c$};
    \filldraw [black] (1.5,2.5) circle (2pt);
    \node at (2,2.867) {$P(x_0,y_0)$};
    \filldraw [black] (-3,-1) circle (2pt);
    \node at (-3.5,-1.333) {$Q(-x_1,-y_1)$};
    \draw[thick,black,dashed](-3,-1)--(-3,2.5)--(1.5,2.5)--(1.5,-1)--(-3,-1);
    \node at (-1.5,-1.125) {$x_1$};
    \node at (.75,-1.125) {$x_0$};
    \node at (.75,0.125) {$x_0$};
    \node at (-0.125,-0.5) {$y_1$};
    \node at (1.625,-0.5) {$y_1$};
    \node at (1.625,1.25) {$y_0$};
    \draw[thick,violet] (0.5,0.833) arc
    [
        start angle=59,
        end angle=90,
        x radius=0.972cm,
        y radius=0.972cm
    ] ;
    \draw[thick,violet] (1,1.667) arc
    [
        start angle=239,
        end angle=270,
        x radius=0.972cm,
        y radius=0.972cm
    ] ;
    \draw[thick,orange] (-1,0) arc
    [
        start angle=180,
        end angle=198,
        x radius=1cm,
        y radius=1cm
    ] ;
    \draw[thick,orange] (-2,-1) arc
    [
        start angle=0,
        end angle=18,
        x radius=1cm,
        y radius=1cm
    ] ;
    \draw[thick,red] (-2.1,-0.7) arc
    [
        start angle=18,
        end angle=38,
        x radius=.947cm,
        y radius=.947cm
    ] ;
    \draw[thick,blue] (0.78,1.94) arc
    [
        start angle=218,
        end angle=239,
        x radius=0.912cm,
        y radius=0.912cm
    ] ; 
    \draw[thick,\darkgreen] (0.3,0.5) arc
    [
        start angle=61,
        end angle=198,
        x radius=0.583cm,
        y radius=0.583cm
    ] ; 
    \node[red] at (-2.3,-0.625) {$\beta$};
    \node[blue] at (1,1.925) {$\alpha$};
    \node[\darkgreen] at (-0.25,0.25) {$\theta$};
    \node[orange] at (-0.75,-0.125) {$\gamma$};
    \node[orange] at (-2.25,-0.875) {$\gamma$};
    \node[violet] at (0.1875,0.75) {$\phi$};
    \node[violet] at (1.3125,1.75) {$\phi$};
    \draw[thick,black](1.25,0)--(1.25,0.25)--(1.5,0.25);
    \draw[thick,black](-2.75,0)--(-2.75,0.25)--(-3,0.25);
    \draw[thick,black](0,-0.75)--(-0.25,-0.75)--(-0.25,-1);
    \draw[thick,black](0,2.25)--(-0.25,2.25)--(-0.25,2.5);
    \filldraw[black] (0,0) circle (2pt);
    \node at (0.5,-0.25) {$O(0,0)$};
\end{tikzpicture}\caption{$\theta$ is split across quadrants I, II, and III}
\label{fig:4}
\end{figure}
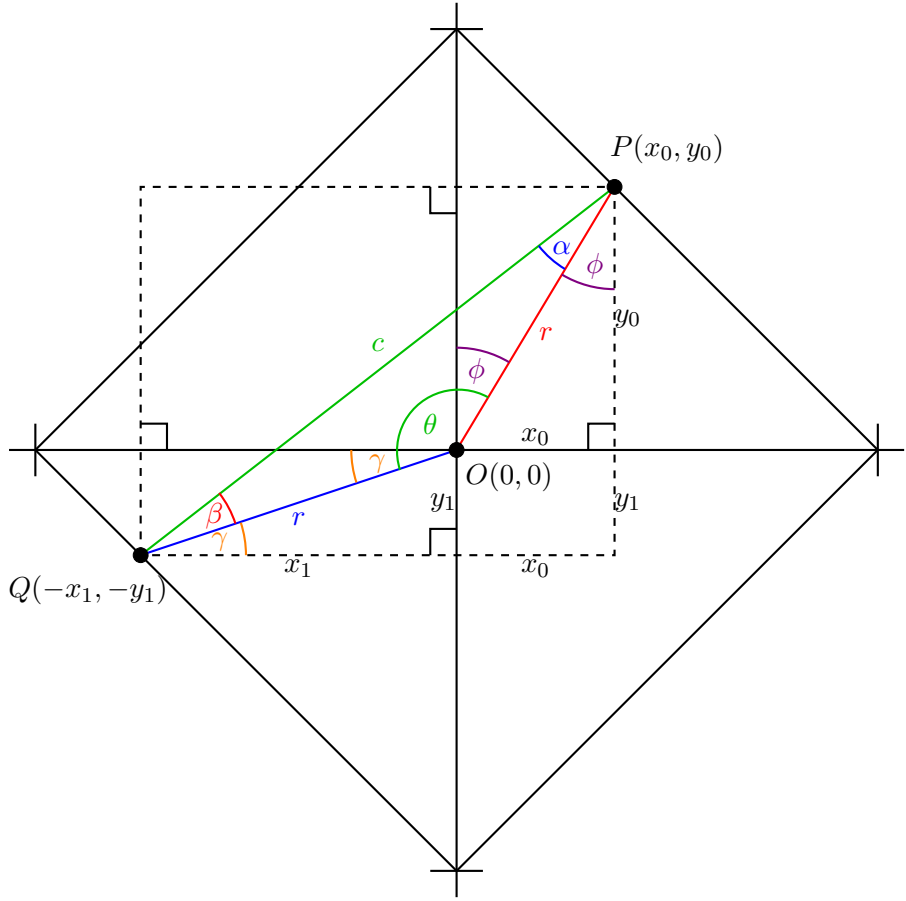

Thus, we follow the same strategy as earlier cases.
\begin{align*}
    \alpha  &=\frac{2}{1+\text{tan}_e(\phi_e)}-\frac{2}{1+\text{tan}_e(\phi_e+\alpha_e)}\\
    &=\frac{2}{1+\frac{x_0}{y_0}}-\frac{2}{1+\frac{x_0+x_1}{y_0+y_1}}\\
    &=\frac{2y_0}{x_0+y_0}-\frac{2(y_0+y_1)}{y_0+y_1+x_0+x_1}\\
%    &=\frac{2y_0}{b}-\frac{2(y_0+y_1)}{a+b}\\
%    &=\frac{2y_0}{b}-\frac{2(y_0+y_1)}{2b}\\
    &=\frac{2y_0}{r}-\frac{y_0+y_1}{r}\\
    &=\frac{y_0-y_1}{r}
\end{align*}
\begin{align*}
    \beta   &=\frac{2}{1+\text{tan}_e(\gamma_e)}-\frac{2}{1+\text{tan}_e(\gamma_e+\beta_e)}\\
    &=\frac{2}{1+\frac{y_1}{x_1}}-\frac{2}{1+\frac{y_0+y_1}{x_0+x_1}}\\
    &=\frac{2x_1}{x_1+y_1}-\frac{2(x_0+x_1)}{x_0+x_1+y_0+y_1}\\
%    &=\frac{2x_1}{a}-\frac{2(x_0+x_1)}{a+b}\\
%    &=\frac{2x_1}{b}-\frac{2(x_0+x_1)}{2b}\\
    &=\frac{2x_1}{r}-\frac{x_0+x_1}{r}\\
    &=\frac{x_1-x_0}{r}
\end{align*}

We look for when $\alpha-\beta=0$. However, regardless of the values of $P(x_0,y_0)$ and $Q(-x_1,-y_1)$, $\alpha-\beta=\frac{y_0-y_1}{r}-\frac{x_1-x_0}{r} = \frac{(x_0+y_0)-(x_1+y_1)}{r} =\frac{r-r}{r} = 0$.
%We see that $\alpha=\beta$ is only true when $a=b$. However, $a$ is always equal to $b$. 

Therefore, in the case where $\theta$ is split across quadrants I, II, and III, the base angles of isosceles $\triangle PQO$, $\angle PQO$ and $\angle QPO$, are always equal, where $P=(x_0,y_0)$ and $Q(-x_1,-y_1)$, including when one or more of these points lies on an axis.

Thus, for isosceles triangle $\triangle PQO$ where $O$ is at the origin and $PO \cong QO$, we have corresponding base angle equivalence only in the cases outlined in the above proposition.
\end{proof}

By directly applying Theorem \ref{refrottrans}, we obtain the following result.

\begin{corollary}
    Any isosceles triangle $\triangle ABC$ where $AC \cong BC$ will have corresponding base angle equivalence only when its isometric image $\triangle PQO$ satisfies one of the conditions in Proposition \ref{i5t}.
\end{corollary}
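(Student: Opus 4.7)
The plan is to reduce an arbitrary isosceles triangle $\triangle ABC$ with $AC \cong BC$ to one whose apex lies at the origin and whose legs fall into one of the three canonical configurations analyzed in Proposition \ref{i5t}. First, I would apply the translation sending $C = (c_1, c_2)$ to the origin; by Lemma \ref{trans} this is a taxicab isometry, so $\triangle ABC$ maps to a congruent triangle $\triangle A'B'O$ whose legs $OA'$ and $OB'$ have equal taxicab length and whose base angles equal those of $\triangle ABC$ (angles are translation invariant).

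Next, I would post-compose with a combination of rotations by $2n$ $t$-radians about the origin and reflections across $y = x$, $y = -x$, $y = 0$, and $x = 0$. By Theorem \ref{refrottrans}, every such composition is a taxicab isometry. Using a rotation through a suitable multiple of $2$ $t$-radians, I can place the image of $A'$, call it $P$, in quadrant I; let $Q$ denote the image of $B'$. If $Q$ lies in quadrants I, II, or III, we land directly in cases (1), (2), or (3) of Proposition \ref{i5t}. If instead $Q$ lies in quadrant IV, a further rotation by $2$ $t$-radians sends $Q$ into quadrant I and $P$ into quadrant II; after relabeling (permissible because the hypothesis $PO \cong QO$ and the conclusion $\angle PQO \cong \angle QPO$ are both symmetric in $P$ and $Q$), we are again in case (2). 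Thus every such $\triangle ABC$ admits a taxicab-isometric image $\triangle PQO$ of the form covered by the proposition.

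To finish, I would observe that these isometries preserve taxicab angle measure as well as distance: an isometry $f$ sends the unit taxicab circle about any vertex $V$ to the unit taxicab circle about $f(V)$, and preserves the taxicab arc length of the intercepted arc, since that arc is a piecewise-linear path whose segments have lengths preserved by $f$. Consequently, the base angles of $\triangle ABC$ are equal if and only if the base angles of $\triangle PQO$ are equal. Applying Proposition \ref{i5t} to $\triangle PQO$ then yields the corollary.

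The main obstacle I foresee is the bookkeeping of the case reduction — in particular verifying cleanly that the quadrant-IV configuration really does collapse into case (2) after an additional rotation and relabeling, and that the relabeling step is justified by the symmetry of the hypothesis and conclusion. Everything else is a direct combination of Theorem \ref{refrottrans}, the angle-preservation observation, and Proposition \ref{i5t}.
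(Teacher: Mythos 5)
Your proposal is correct and follows the same route the paper intends: the paper offers no written proof beyond the remark that the corollary follows ``by directly applying Theorem \ref{refrottrans},'' and your argument simply fills in that application (translate $C$ to the origin, normalize with the special rotations/reflections, note angle preservation, invoke Proposition \ref{i5t}). One small simplification: since the cases of Proposition \ref{i5t} are stated for \emph{any} pair of same/adjacent/opposing quadrants (not just quadrants I--II), your extra rotation and relabeling for the quadrant-IV configuration is unnecessary --- after translating the apex to the origin you are already in one of the three cases.
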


%Note about triangles in quadrant I requiring symmetry about line y=x.

%%%%%%%%%%%%%%%%%%%%%%%%%%
%   Bibliography
%%%%%%%%%%%%%%%%%%%%%%%%%%
\bibliography{Taxicab} 
% Pulls entries from the Taxicab.bib file.

\bibliographystyle{plain} 
% We choose the "plain" reference style. This can/will be changed later, to match the style of the journal.

\end{document}